\definecolor{dullmagenta}{rgb}{0.4,0,0.4}   
\definecolor{darkblue}{rgb}{0,0,0.4}
\newtheorem{theorem}{Theorem}
\newtheorem{lemma}[theorem]{Lemma}
\newcommand{\dPain}[1]{\text{P}\left(\mathrm{#1}\right)}
\theoremstyle{definition}
\theoremstyle{remark}
\numberwithin{equation}{section}
\begin{document}
{\noindent\Large\bf Recurrence coefficients for the time-evolved Jacobi weight and discrete Painlev\'e equations on the $D_{5}$ Sakai surface}
\medskip
\begin{flushleft}
\textbf{Mengkun Zhu}\\
School of Mathematics and Statistics, Qilu University of Technology (Shandong Academy of Sciences)
Jinan 250353, China\\
E-mail: \href{mailto:zmk@qlu.edu.cn}{\texttt{zmk@qlu.edu.cn}}\\[5pt]

\textbf{Siqi Chen}\\
School of Mathematics and Statistics, Qilu University of Technology (Shandong Academy of Sciences)
Jinan 250353, China\\
E-mail: \href{mailto:chen_siqi0301@163.com}{\texttt{chen\_siqi0301@163.com}}\\[5pt]

\textbf{Xuhao Zhang}\\
School of Mathematics and Statistics, Qilu University of Technology (Shandong Academy of Sciences)
Jinan 250353, China\\
E-mail: \href{zhanghaotimmy@163.com}{\texttt{zhanghaotimmy@163.com}}\\[5pt]

\emph{Keywords}: Orthogonal polynomials, Painlev\'e equations,
Birational transformations\\[3pt]
\emph{MSC2020}: 33C47, 34M55, 14E07
\end{flushleft}
\begin{abstract}
In this paper, we focus on the relationship between the d-$\dPain{A_{3}^{(1)}/D_{5}^{(1)}}$ equations and a time-evolved Jacobi weight,
\begin{equation*}
w(x)=x^{\alpha}(1-x)^{\beta}\mathrm{e}^{-sx}, \quad x\in[0,1], \quad \alpha,\beta > -1, \quad s>0.
\end{equation*}
From the perspective of Sakai's geometric theory of Painlev\'e equations, we derive that a recurrence relation closely related to the recurrence coefficients of monic polynomials orthogonal with $w(x)$ is equivalent to the standard d-$\dPain{A_{3}^{(1)}/D_{5}^{(1)}}$ equation.
\end{abstract}
\section{Introduction} 
\label{Introduction}
The study of recurrence coefficients of orthogonal polynomials for varying weights is often related to Painlev\'e equations. For example, Clarkson and Jordaan \cite{CJ:2014:TRBSLPATFPE} examined the semi-classical Laguerre weight $w(x)=x^{\lambda}\mathrm{e}^{-x^2+sx}$, with $x\in\mathbb{R}^{+},  \lambda > -1, s\in\mathbb{R}$, and showed that its recurrence coefficients satisfy differential equations associated with the $\mathrm{P_{IV}}$ equation. For additional relevant research, refer to \cite{Mag:1995:PTDEFTRCOSOP}, \cite{BV:2010:DPEFRCOSLP} and \cite{CI:2010:PAASLSIHRMEI}, see also a recent monograph \cite{VA:2018:OPAPE} and its references. Sakai \cite{Sak:2001:RSAWARSGPE} proposed a geometric-theoretic classification of both continuous and discrete Painlev\'e equations, which completely classified all possible configuration spaces for discrete Painlev\'e dynamics as families of specific rational algebraic surfaces\textemdash generalized Halphen surfaces.

In this paper, building on Sakai's geometric framework, we focus on the time-evolved Jacobi weight,
\begin{equation}\label{eq:wf}
w(x)=w(x;s)=x^{\alpha}(1-x)^{\beta}\mathrm{e}^{-sx}, \quad x\in[0,1], \quad \alpha,\beta > -1, \quad s>0.
\end{equation}
Let $P_n(x)$ be the monic orthogonal polynomials of degree $n$ with respect to the weight \eqref{eq:wf}, i.e.
\begin{equation*}
\int_{0}^{1}P_{m}(x;s)P_{n}(x;s)w(x;s)\mathrm{d}x=h_{n}(s)\delta_{m,n},\qquad m,n=0,1,2,...,
\end{equation*}
where
\begin{equation*}
P_{n}(x;s)=x^{n}+\mathrm{p}(n,s)x^{n-1}+\cdots.
\end{equation*}
The monic orthogonal polynomials satisfy the three term recurrence relation
\begin{equation*}
xP_{n}(x;s)=P_{n+1}(x;s)+\alpha_{n}P_{n}(x;s)+\beta_{n}P_{n-1}(x;s),
\end{equation*}
with initial conditions $P_{0}(x;s)=1$, $\beta_{0}P_{-1}(x;s)=0$.

The lowering and raising ladder operators for $P_n(x)$ with respect to \eqref{eq:wf} are given by
\begin{align*}
P_{n}'(x)    &=-B_{n}(x)P_{n}(x)+\beta_{n}(s)A_{n}(x)P_{n-1}(x),\\
P_{n-1}'(x)  &=[B_{n}(x)+v'(x)]P_{n-1}(x)-A_{n-1}(x)P_{n}(x),
\end{align*}
where
\begin{equation}\label{eq:AB}
A_{n}(x)    =\frac{R_{n}(s)}{x}+\frac{s-R_{n}(s)}{x-1},\qquad
B_{n}(x)    =\frac{r_{n}(s)}{x}-\frac{n+r_{n}(s)}{x-1},
\end{equation}
and
\begin{align*}
R_{n}(s)    &:=\frac{\alpha}{h_{n}}\int_{0}^{1}P_{n}^2(y)y^{\alpha-1}(1-y)^{\beta}\mathrm{e}^{-sy}\mathrm{d}y,\\
r_{n}(s)    &:=\frac{\alpha}{h_{n-1}}\int_{0}^{1}P_{n}(y)P_{n-1}(y)y^{\alpha-1}(1-y)^{\beta}\mathrm{e}^{-sy}\mathrm{d}y.
\end{align*}
The functions $A_{n}(x)$ and $B_{n}(x)$ are not independent but satisfy the following compatibility conditions,
\begin{align}
B_{n+1}(x)+B_n(x)&=\left(x-\alpha_n(s)\right)A_n(x)-v'(x),\tag{$S_1$}\label{$S_1$}\\
1+\left(x-\alpha_n(s)\right)\left(B_{n+1}(x)-B_{n}(x)\right)&=\beta_{n+1}(s)A_{n+1}(x)-\beta_{n}(s)A_{n-1}(x),\tag{$S_2$}\\
B_n^2(x)+v'(x)B_n(x)+\sum_{j=0}^{n-1}A_{j}(x)&=\beta_n(s)A_n(x)A_{n-1}(x),\tag{$S_2'$}\label{$S_2'$}
\end{align}
where $v(x)=-\ln w(x)$. In theorem 2.6 of \cite{ZBCZ:2018:COMDOTJUEPAE}, Zhan et al. substitute \eqref{eq:AB} into \eqref{$S_1$} and \eqref{$S_2'$}, after a series of simplifications, they obtain the following equations,
\begin{align}
&s(r_{n+1}+r_{n})=R_{n}^2-(2n+1+\alpha+\beta+s)R_{n}+s\alpha,\label{eq:R}\\
&n(n+\beta)+(2n+\alpha+\beta)r_{n}=(r_{n}^2-\alpha r_{n})\left(\frac{s^2}{R_{n}R_{n-1}}-\frac{s}{R_{n}}-\frac{s}{R_{n-1}}\right).\label{eq:r}
\end{align}

According to \eqref{eq:R} and \eqref{eq:r}, we introduce a transformation
\begin{equation*}
x_{n}(s):=\frac{1}{s}-\frac{1}{R_{n-1}(s)}, \qquad y_{n}(s):=-r_n(s),
\end{equation*}
which leads to the recurrence relation
\begin{equation}\label{eq:xyn-evol}
	\left\{ \begin{aligned}
		 x_{n}x_{n+1} &= \frac{y_{n}^2-(2n+\beta)y_{n}+n(n+\beta)}{s^2  (y_{n}^2+\alpha y_{n})},\\
		 y_{n}+y_{n-1} &=  - \frac{\alpha s^2x_{n}^2+s(2n-1-\alpha+\beta+s)x_{n}-2n-\beta+1}{(1-sx_{n})^2}.
	\end{aligned}\right.
\end{equation}

In section~\ref{Preliminaries_and_the_main_result} and \ref{Proof_of_the_main_result} of this paper, we establish a relationship between this recurrence relation and the standard  d-$\dPain{A_{3}^{(1)}/D_{5}^{(1)}}$ equation.
\section{Preliminaries and the main result} 
\label{Preliminaries_and_the_main_result}
To make this paper self-contained, we briefly describe some basic geometric data for the standard discrete Painlev\'e equations in the $D_5^{(1)}$ surface family, based on \cite{KajNouYam:2017:GAOPE} and the appendix of \cite{HDC:2020:GPITLUEADPE}.

The standard surface root basis and the standard symmetry sub-lattices depicted in Figure \ref{fig:d-roots-d51-a31}.
\begin{figure}[ht]
\begin{equation}\label{eq:d-roots-d51-a31}			
	\raisebox{-32.1pt}{\begin{tikzpicture}[
			scale=0.75, 
			elt/.style={circle,draw=black!100,thick, inner sep=0pt,minimum size=2mm}]
		\path 	(-1,1) 	node 	(d0) [elt, label={[xshift=-10pt, yshift = -10 pt] $\delta_{0}$} ] {}
		        (-1,-1) node 	(d1) [elt, label={[xshift=-10pt, yshift = -10 pt] $\delta_{1}$} ] {}
		        ( 0,0) 	node  	(d2) [elt, label={[xshift=-10pt, yshift = -10 pt] $\delta_{2}$} ] {}
		        ( 1,0) 	node  	(d3) [elt, label={[xshift=10pt, yshift = -10 pt] $\delta_{3}$} ] {}
		        ( 2,1) 	node  	(d4) [elt, label={[xshift=10pt, yshift = -10 pt] $\delta_{4}$} ] {}
		        ( 2,-1) node 	(d5) [elt, label={[xshift=10pt, yshift = -10 pt] $\delta_{5}$} ] {};
		\draw [black,line width=1pt ] (d0) -- (d2) -- (d1)  (d2) -- (d3) (d4) -- (d3) -- (d5);
	\end{tikzpicture}} \quad
			\begin{alignedat}{1}
			\delta_{0} &= \mathcal{E}_{1} - \mathcal{E}_{2}, \\
            \delta_{1} &= \mathcal{E}_{3} - \mathcal{E}_{4}, \\
            \delta_{2} &= \mathcal{H}_{f} - \mathcal{E}_{1} - \mathcal{E}_{3}, \\
            \delta_{3} &= \mathcal{H}_{g} - \mathcal{E}_{5} - \mathcal{E}_{7},\\
            \delta_{4} &= \mathcal{E}_{5} - \mathcal{E}_{6},\\
            \delta_{5} &= \mathcal{E}_{7} - \mathcal{E}_{8}.
			\end{alignedat} \qquad
\raisebox{-32.1pt}{\begin{tikzpicture}[
			scale=0.75, 
			elt/.style={circle,draw=black!100,thick, inner sep=0pt,minimum size=2mm}]
		\path 	(-1,1) 	node 	(a0) [elt, label={[xshift=-10pt, yshift = -10 pt] $\alpha_{0}$} ] {}
		        (-1,-1) node 	(a1) [elt, label={[xshift=-10pt, yshift = -10 pt] $\alpha_{1}$} ] {}
		        ( 1,-1) node  	(a2) [elt, label={[xshift=10pt, yshift = -10 pt] $\alpha_{2}$} ] {}
		        ( 1,1) 	node 	(a3) [elt, label={[xshift=10pt, yshift = -10 pt] $\alpha_{3}$} ] {};
		\draw [black,line width=1pt ] (a0) -- (a1) -- (a2) --  (a3) -- (a0);
	\end{tikzpicture}} \quad
			\begin{alignedat}{1}
			\alpha_{0} &= \mathcal{H}_{g} - \mathcal{E}_{1} - \mathcal{E}_{2}, \\
			\alpha_{1} &= \mathcal{H}_{f} - \mathcal{E}_{5} - \mathcal{E}_{6}, \\
            \alpha_{2} &= \mathcal{H}_{g} - \mathcal{E}_{3} - \mathcal{E}_{4},\\
            \alpha_{3} &= \mathcal{H}_{f} - \mathcal{E}_{7} - \mathcal{E}_{8}.
			\end{alignedat}
\end{equation}
	\caption{The standard surface (left) and symmetry (right) root basis for the $D_{5}^{(1)}$ surface.}
	\label{fig:d-roots-d51-a31}	
\end{figure}

Consider the point configuration and its associated Sakai surface illustrated in Figure~\ref{fig:surface-d5}. Here, the base points are formulated using the root variables $a_{0}$, $a_{1}$, $a_{2}$ and $a_{3}$, which are subject to the normalization condition $a_{0}+a_{1}+a_{2}+a_{3}=1$, these are given as follows:
\begin{equation}
\begin{alignedat}{2}
	&p_{1}\left(F=\frac{1}{f}=0,g=-t\right)&&\leftarrow p_{2}\left(u_{1}=\frac{1}{f}=0,v_{1}=f(g+t)=-a_{0}\right),\\
	&p_{3}\left(F=\frac{1}{f}=0,g=0\right)&&\leftarrow p_{4}\left(u_{3}=\frac{1}{f}=0,v_{3}=fg=-a_{2}\right),\\
    &p_{5}\left(f=0,G=\frac{1}{g}=0\right)&&\leftarrow
p_{6}\left(U_{5}=fg=a_{1},V_{5}=\frac{1}{g}=0\right),\\
    &p_{7}\left(f=1,G=\frac{1}{g}=0\right)&&\leftarrow
p_{8}\left(U_{7}=(f-1)g=a_{3},V_{7}=\frac{1}{g}=0\right).
\end{alignedat}
\end{equation}
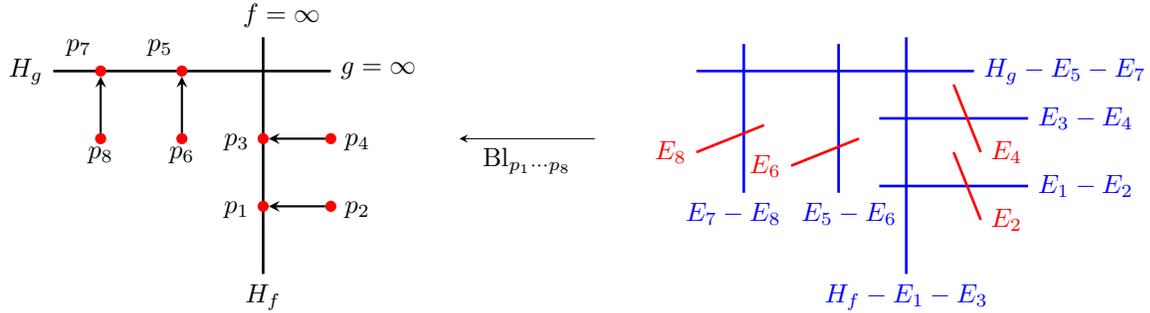
\begin{figure}[ht]
	\begin{tikzpicture}[>=stealth,basept/.style={circle, draw=red!100, fill=red!100, thick, inner sep=0pt,minimum size=1.2mm}, scale=0.9]
		\begin{scope}[xshift = -1cm]
			\draw [black, line width = 1pt] 	(3.6,2.5) 	-- (-0.5,2.5)	node [left] {$H_{g}$} node[pos=0, right] {$g=\infty$};
			\draw [black, line width = 1pt] 	(2.6,3) -- (2.6,-0.5)		node [below] {$H_{f}$} node[pos=0, above, xshift=7pt] {$f=\infty$};
		
			\node (p1) at (2.6,0.5) [basept,label={[xshift=-10pt, yshift = -10 pt] $p_{1}$}] {};
			\node (p2) at (3.6,0.5) [basept,label={[xshift=10pt, yshift = -10 pt] $p_{2}$}] {};
			\node (p3) at (2.6,1.5) [basept,label={[xshift=-10pt, yshift = -10 pt] $p_{3}$}] {};
			\node (p4) at (3.6,1.5) [basept,label={[xshift=10pt, yshift = -10 pt] $p_{4}$}] {};
			\node (p5) at (1.4,2.5) [basept,label={[above left] $p_{5}$}] {};
			\node (p6) at (1.4,1.5) [basept,label={[xshift=0pt, yshift = -15 pt] $p_{6}$}] {};
			\node (p7) at (0.2,2.5) [basept,label={[above left] $p_{7}$}] {};
			\node (p8) at (0.2,1.5) [basept,label={[xshift=0pt, yshift = -15 pt] $p_{8}$}] {};
			\draw [line width = 0.8pt, ->] (p2) -- (p1);
			\draw [line width = 0.8pt, ->] (p4) -- (p3);
			\draw [line width = 0.8pt, ->] (p6) -- (p5);
			\draw [line width = 0.8pt, ->] (p8) -- (p7);
		\end{scope}
	
		\draw [->] (6.5,1.5)--(4.5,1.5) node[pos=0.5, below] {$\operatorname{Bl}_{p_{1}\cdots p_{8}}$};
	
		\begin{scope}[xshift = 8.5cm]
			\draw [blue, line width = 1pt] 	(-0.5,2.5) 	-- (3.6,2.5)	node [right] {$H_{g}-E_{5} - E_{7}$} node[pos=0, right] {};
			\draw [blue, line width = 1pt] 	(2.6,3) -- (2.6,-0.5)			node [below] {$H_{f}-E_{1}-E_{3}$} node[pos=0, above, xshift=7pt] {};

			\draw [blue,line width = 1pt] (2.2,0.8) -- (4.4,0.8) node [right] {$E_{1} - E_{2}$};
			\draw [red,line width = 1pt] (3.3,1.3) -- (3.7,0.3) node [right] {$E_{2}$};
			\draw [blue,line width = 1pt] (2.2,1.8) -- (4.4,1.8) node [right] {$E_{3} - E_{4}$};
			\draw [red,line width = 1pt] (3.3,2.3) -- (3.7,1.3) node [right] {$E_{4}$};
			\draw [blue,line width = 1pt] (1.6,0.7) -- (1.6,2.9) node [pos=0,below] {$\phantom{E}E_{5} - E_{6}$};
			\draw [red,line width = 1pt] (0.9,1.1) -- (1.9,1.5) node [pos=0,left] {$E_{6}$};
			\draw [blue,line width = 1pt] (0.2,0.7) -- (0.2,2.9) node [pos=0,below] {$E_{7} - E_{8}\phantom{E}$};
			\draw [red,line width = 1pt] (-0.5,1.3) -- (0.5,1.7) node [pos=0,left] {$E_{8}$};
		\end{scope}
	\end{tikzpicture}
	\caption{The standard $D_{5}^{(1)}$ Sakai surface.}
	\label{fig:surface-d5}
\end{figure}

We now describe the birational representation of the extended affine Weyl symmetry group $\widetilde{W}\left(A_{3}^{(1)}\right) = \operatorname{Aut}\left(A_{3}^{(1)}\right) \ltimes W\left(A_{3}^{(1)}\right)$. The affine Weyl group $W\left(A_{3}^{(1)}\right)$ is characterized by its generators $w_{j}$ and the relations encoded in the right part of Figure~\ref{fig:d-roots-d51-a31},
\begin{equation*}
	W\left(A_{3}^{(1)}\right) = W\left(\raisebox{-20pt}{\begin{tikzpicture}[
			elt/.style={circle,draw=black!100,thick, inner sep=0pt,minimum size=1.5mm}]
		\path 	(-0.5,0.5) 	node 	(a0) [elt, label={[xshift=-10pt, yshift = -10 pt] $\alpha_{0}$} ] {}
		        (-0.5,-0.5) node 	(a1) [elt, label={[xshift=-10pt, yshift = -10 pt] $\alpha_{1}$} ] {}
		        (0.5,-0.5) 	node 	(a2) [elt, label={[xshift=10pt, yshift = -10 pt] $\alpha_{2}$} ] {}
		        (0.5,0.5) 	node 	(a3) [elt, label={[xshift=10pt, yshift = -10 pt] $\alpha_{3}$} ] {};
		\draw [black,line width=1pt ] (a0) -- (a1) -- (a2)--(a3) -- (a0);
	\end{tikzpicture}} \right)
	=
	\left\langle w_{0},\dots, w_{3}\ \left|\
	\begin{alignedat}{2}
    w_{j}^{2} = e,\quad  w_{j}\circ w_{k} &= w_{k}\circ w_{j}& &\text{ when
   				\raisebox{-0.08in}{\begin{tikzpicture}[
   							elt/.style={circle,draw=black!100,thick, inner sep=0pt,minimum size=1.5mm}]
   						\path   ( 0,0) 	node  	(ai) [elt] {}
   						        ( 0.5,0) 	node  	(aj) [elt] {};
   						\draw [black] (ai)  (aj);
   							\node at ($(ai.south) + (0,-0.2)$) 	{$\alpha_{j}$};
   							\node at ($(aj.south) + (0,-0.2)$)  {$\alpha_{k}$};
   							\end{tikzpicture}}}\\
    w_{j}\circ w_{k}\circ w_{j} &= w_{k}\circ w_{j}\circ w_{k}& &\text{ when
   				\raisebox{-0.17in}{\begin{tikzpicture}[
   							elt/.style={circle,draw=black!100,thick, inner sep=0pt,minimum size=1.5mm}]
   						\path   ( 0,0) 	node  	(ai) [elt] {}
   						        ( 0.5,0) 	node  	(aj) [elt] {};
   						\draw [black] (ai) -- (aj);
   							\node at ($(ai.south) + (0,-0.2)$) 	{$\alpha_{j}$};
   							\node at ($(aj.south) + (0,-0.2)$)  {$\alpha_{k}$};
   							\end{tikzpicture}}}
	\end{alignedat}\right.\right\rangle,
\end{equation*}
and the group $\operatorname{Aut}\left(A_{3}^{(1)}\right)$ of Dynkin diagram automorphisms is isomorphic to the dihedral group $\mathbb{D}_4$\textemdash the symmetry group of the square\textemdash generated by three reflections $\sigma_{1}$, $\sigma_{2}$ and $\sigma_{3}$ which act on both the symmetry roots and surface roots,
\begin{equation}
		\sigma_{1} = (\alpha_{0}\alpha_{3})(\alpha_{1}\alpha_{2})=
		(\delta_{0}\delta_{5})(\delta_{1}\delta_{4})(\delta_{2}\delta_{3}),\quad
		\sigma_{2} = (\alpha_{0}\alpha_{2})=(\delta_{0}\delta_{1}), \quad
		\sigma_{3} = (\alpha_{1}\alpha_{3})=(\delta_{4}\delta_{5}),
	\end{equation}
here, we have employed the standard cycle notation for permutations. This group can be realized via reflections on the $\operatorname{Pic(\mathcal{X})}$, with $w_{j}$ expressed in terms of reflections through the symmetry roots $\alpha_{j}$,
\begin{equation}\label{eq:root-refl}
	w_{j}(\mathcal{C}) = w_{\alpha_{j}}(\mathcal{C}) = \mathcal{C} - 2
	\frac{\mathcal{C}\bullet \alpha_{j}}{\alpha_{j}\bullet \alpha_{j}}\alpha_{j}
	= \mathcal{C} + \left(\mathcal{C}\bullet \alpha_{j}\right) \alpha_{j},\qquad \mathcal{C}\in \operatorname{Pic(\mathcal{X})},
\end{equation}
and $\sigma_{j}$ can be realized as compositions of reflections in other roots in the $\operatorname{Pic(\mathcal{X})}$,
\begin{equation*}
		\sigma_{1} = w_{\mathcal{E}_{1}-\mathcal{E}_{7}}\circ w_{\mathcal{E}_{2}-\mathcal{E}_{8}}\circ w_{\mathcal{E}_{3}-\mathcal{E}_{5}}\circ w_{\mathcal{E}_{4}-\mathcal{E}_{6}}\circ w_{\mathcal{H}_{f}-\mathcal{H}_{g}},\quad
		\sigma_{2} = w_{\mathcal{E}_{1}-\mathcal{E}_{3}}\circ w_{\mathcal{E}_{2}-\mathcal{E}_{4}}, \quad
		\sigma_{3} = w_{\mathcal{E}_{5}-\mathcal{E}_{7}}\circ w_{\mathcal{E}_{6}-\mathcal{E}_{8}}.
	\end{equation*}
\begin{lemma}
The generators of the extended affine Weyl group $\widetilde{W}\left(A_{3}^{(1)}\right)$ transform an initial point configuration
\begin{equation*}
\left(\begin{matrix} a_{0} & a_{1} \\ a_{2} & a_{3} \end{matrix}\ ;\ t\ ;
		\begin{matrix} f \\ g \end{matrix}\right)
\end{equation*}
is given by the following birational maps:
\begin{alignat}{2}
		w_{0}&:
		\left(\begin{matrix} a_{0} & a_{1} \\ a_{2} & a_{3} \end{matrix}\ ;\ t\ ;
		\begin{matrix} f \\ g \end{matrix}\right)
		&&\mapsto
		\left(\begin{matrix} -a_{0} & a_{0} + a_{1} \\ a_{2} & a_{0} + a_{3} \end{matrix}\ ;\ t\ ;
		\begin{matrix} \displaystyle f + \frac{a_{0}}{g + t} \\ g \end{matrix}\right), \\
		w_{1}&: \left(\begin{matrix} a_{0} & a_{1} \\ a_{2} & a_{3} \end{matrix}\ ;\ t\ ;
		\begin{matrix} f \\ g \end{matrix}\right)
		&&\mapsto
		\left(\begin{matrix} a_{0} + a_{1} & -a_{1} \\ a_{1} + a_{2} & a_{3} \end{matrix}\ ;\ t\ ;
		\begin{matrix}  f \\ \displaystyle g - \frac{a_{1}}{f} \end{matrix}\right), \\
		w_{2}&:
		\left(\begin{matrix} a_{0} & a_{1} \\ a_{2} & a_{3} \end{matrix}\ ;\ t\ ;
		\begin{matrix} f \\ g \end{matrix}\right)
		&&\mapsto
		\left(\begin{matrix} a_{0} & a_{1} + a_{2} \\ -a_{2} & a_{2} + a_{3} \end{matrix}\ ;\ t\ ;
		\begin{matrix} \displaystyle f + \frac{a_{2}}{g}\\ g \end{matrix}\right), \\
		w_{3}&:
		\left(\begin{matrix} a_{0} & a_{1} \\ a_{2} & a_{3} \end{matrix}\ ;\ t\ ;
		\begin{matrix} f \\ g \end{matrix}\right)
		&&\mapsto
		\left(\begin{matrix} a_{0}+a_{3} & a_{1} \\ a_{2}+a_{3} & -a_{3} \end{matrix}\ ;\ t\ ;
		\begin{matrix} f \\ \displaystyle  g - \frac{a_{3}}{f-1} \end{matrix}\right), \\
        \sigma_{1}&:
		\left(\begin{matrix} a_{0} & a_{1} \\ a_{2} & a_{3} \end{matrix}\ ;\ t\ ;
		\begin{matrix} f \\ g \end{matrix}\right)
		&&\mapsto
		\left(\begin{matrix} a_{3} & a_{2} \\ a_{1} & a_{0}  \end{matrix}\ ;\ -t\ ;
		\begin{matrix} \displaystyle -\frac{g}{ t} \\ f t \end{matrix}\right), \\
		\sigma_{2}&:
		\left(\begin{matrix} a_{0} & a_{1} \\ a_{2} & a_{3} \end{matrix}\ ;\ t\ ;
		\begin{matrix} f \\ g \end{matrix}\right)
		&&\mapsto
		\left(\begin{matrix} a_{2} & a_{1} \\ a_{0} & a_{3}  \end{matrix}\ ;\ -t\ ;
		\begin{matrix} f \\ g + t \end{matrix}\right), \\
		\sigma_{3}&:
		\left(\begin{matrix} a_{0} & a_{1} \\ a_{2} & a_{3} \end{matrix}\ ;\ t\ ;
		\begin{matrix} f \\ g \end{matrix}\right)
		&&\mapsto
		\left(\begin{matrix} a_{0} & a_{3} \\ a_{2} & a_{1}  \end{matrix}\ ;\ -t\ ;
		\begin{matrix} 1-f \\ -g  \end{matrix}\right).
	\end{alignat}
\end{lemma}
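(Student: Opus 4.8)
The plan is to verify the lemma in two logically separate pieces for each generator: the affine-linear action on the root variables $(a_0,a_1,a_2,a_3;t)$, and the birational action on the coordinates $(f,g)$. The guiding principle is that each generator acts on $\operatorname{Pic}(\mathcal{X})$ as a Cremona isometry fixing the anti-canonical class, and any such isometry is realized by a birational map once the point configuration and the torus action are fixed. It therefore suffices to (i) read off the linear action from the reflection formula \eqref{eq:root-refl}, and (ii) produce a birational map inducing exactly that action on $\operatorname{Pic}(\mathcal{X})$.

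First I would settle the parameter action. For the simple reflections $w_j=w_{\alpha_j}$ this is immediate from \eqref{eq:root-refl}: the root variables are dual to the symmetry roots, so $a_k\mapsto a_k+(\alpha_k\bullet\alpha_j)\,a_j$, where the case $k=j$ gives $a_j\mapsto -a_j$ (since $\alpha_j\bullet\alpha_j=-2$) and for $k\ne j$ the pairings $\alpha_k\bullet\alpha_j$ are the off-diagonal entries of the $A_3^{(1)}$ Cartan matrix encoded by the square diagram on the right of Figure~\ref{fig:d-roots-d51-a31}. Since in that diagram $\alpha_0$ is joined to $\alpha_1$ and $\alpha_3$ but not to $\alpha_2$, this reproduces $w_0:(a_0,a_1,a_2,a_3)\mapsto(-a_0,a_0+a_1,a_2,a_0+a_3)$, and similarly for $w_1,w_2,w_3$. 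For the Dynkin automorphisms I would instead use the explicit permutations of the $\alpha_j$ recorded above, which yield the stated swaps of the $a_j$ together with $t\mapsto -t$. In every case the normalization $a_0+a_1+a_2+a_3=1$ is preserved, since it corresponds to the fixed null root, and this provides the first consistency check.

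The substantive part is the coordinate action. For each generator I would first compute its action on the fibration classes $\mathcal{H}_f$ and $\mathcal{H}_g$ via \eqref{eq:root-refl}, which fixes the qualitative shape of the map. For instance $w_0$ fixes $\mathcal{H}_g$ (so $g\mapsto g$) and sends $\mathcal{H}_f\mapsto\mathcal{H}_f+\alpha_0$, forcing an elementary transformation that shifts $f$ by a function with a simple pole exactly on the fiber $g=-t$ carrying $p_1,p_2$. I would then pin down the single free coefficient by requiring that the map carry the base-point configuration for $(a_i;t)$ to that for the reflected parameters; working in the local chart $(u_1,v_1)=(1/f,\,f(g+t))$ around the infinitely-near point $p_2$ and imposing $a_0\mapsto -a_0$ yields $f\mapsto f+a_0/(g+t)$. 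The same recipe produces the maps for $w_1,w_2,w_3$, with poles located on $f=0$, $g=0$, and $f=1$ respectively. For $\sigma_2,\sigma_3$ the action on $\mathcal{H}_f,\mathcal{H}_g$ is trivial, and the maps are the fiber shift $g\mapsto g+t$ and the reflection $f\mapsto 1-f$ that interchange the relevant base-point cascades; for $\sigma_1$ the reflection $w_{\mathcal{H}_f-\mathcal{H}_g}$ exchanges $\mathcal{H}_f\leftrightarrow\mathcal{H}_g$, so $f$ and $g$ must be swapped, and matching the asymmetric loci $\{g=-t,0\}$ against $\{f=0,1\}$ forces the accompanying scaling $f\mapsto -g/t,\ g\mapsto ft$.

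The main obstacle I anticipate is the coefficient-matching at the infinitely-near base points for the $w_j$: one must track the blow-up cascade $p_{2k-1}\leftarrow p_{2k}$ in the correct local coordinates and check that the elementary transformation both fixes the first base point and moves the second to the location dictated by the reflected root variable, rather than merely preserving the fiber. The scaling factor in $\sigma_1$ is a second delicate point, as it arises only from the non-symmetric placement of the two pairs of base loci. Once all seven maps are in hand, I would close the argument by verifying that they satisfy the defining relations of $\widetilde{W}\left(A_{3}^{(1)}\right)$ — the involutivity $w_j^2=e$, the braid and commutation relations read off the square diagram, and the dihedral relations among the $\sigma_j$ — which simultaneously confirms that the coordinate maps and the parameter actions have been matched consistently.
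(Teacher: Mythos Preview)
Your proposal is correct and follows exactly the standard procedure that the paper itself defers to: the paper omits the proof entirely, saying only that it ``proceeds along the same lines as in \cite{DzhFilSto:2020:RCDOPHWDPE} and \cite{DzhTak:2018:OSAOSGTODPE}.'' Your two-step plan---reading the linear action on $(a_0,\dots,a_3)$ from the reflection formula \eqref{eq:root-refl} and the Dynkin permutations, then determining the birational $(f,g)$-map by computing the induced action on $\mathcal{H}_f,\mathcal{H}_g$ and fixing the remaining coefficient by tracking the infinitely-near base points---is precisely the method of those references, so there is nothing to add beyond noting that your outline is more explicit than what the paper provides.
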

\begin{proof}
The proof proceeds along the same lines as in \cite{DzhFilSto:2020:RCDOPHWDPE} and \cite{DzhTak:2018:OSAOSGTODPE}, so we omit it here.
\end{proof}
The standard discrete Painlev\'e equation on the $D_{5}^{(1)}$ surface is given in section 8.1.17 of \cite{KajNouYam:2017:GAOPE} as
\begin{equation}\label{eq:dPD5-KNY}
	\overline{f} + f = 1 - \frac{a_{2}}{g} - \frac{a_{0}}{g+t},\qquad
	g + \underline{g} = -t + \frac{a_{1}}{f} + \frac{a_{3}}{f-1},
\end{equation}
with the root variable evolution and normalization given by
\begin{equation}\label{eq:dPD5-rv-evol}
	\overline{a}_{0} = a_{0} + 1, \quad \overline{a}_{1} = a_{1}-1, \quad \overline{a}_{2} = a_{2} + 1,\quad \overline{a}_{3} = a_{3} - 1,\qquad
	 a_{0} + a_{1} + a_{2}  + a_{3} = 1.
\end{equation}
According to the evolution of the root variables \eqref{eq:dPD5-rv-evol} we can see the corresponding translation on the root lattice is
\begin{equation}\label{eq:dPD5-transl-KNY}
	\phi_{*}: \upalpha =  \langle \alpha_{0}, \alpha_{1}, \alpha_{2}, \alpha_{3} \rangle
	\mapsto \phi_{*}(\upalpha) = \upalpha + \langle -1,1,-1,1 \rangle \delta.
\end{equation}
Employing established methodologies (see detail in \cite{DzhTak:2018:OSAOSGTODPE}), we derive the subsequent decomposition of $\phi$ using the generators of $\widetilde{W}\left(A_{3}^{(1)}\right)$,
\begin{equation}\label{eq:dPD5-decomp-KNY}
	\phi = \sigma_{3}\sigma_{2} w_{3} w_{1} w_{2} w_{0}.
\end{equation}

Through a systematic application of the reduction approach detailed in \cite{DzhFilSto:2020:RCDOPHWDPE}, we show that the recurrence relation given by \eqref{eq:xyn-evol} is a discrete Painlev\'e equation that is equivalent to the standard d-$\dPain{A_{3}^{(1)}/D_{5}^{(1)}}$ equation \eqref{eq:dPD5-KNY}. The main result of our work is stated below.
\begin{theorem}\label{thm:coordinate-change}
The recurrence relation \eqref{eq:xyn-evol} is equivalent to the standard discrete Painlev\'{e} equation~\eqref{eq:dPD5-KNY} presented in \cite{KajNouYam:2017:GAOPE}. This equivalence is established through the following variable transformation:
\begin{equation}\label{eq:xy2qp}
x(f,g) =-\frac{f(g+t)+n}{t(fg+n)},\quad
y(f,g) = \frac{(fg+n)(g+t)}{t},\quad
s(t)=-t.
\end{equation}
The inverse transformation of variables is provided by
\begin{equation}\label{eq:qp2xy}
f(x,y) = \frac{(1-sx)(n-y+sxy)}{s^2x},\quad
g(x,y) = \frac{s(y-n)}{(1-sx)y-n},\quad
t(s)=-s.
\end{equation}
\end{theorem}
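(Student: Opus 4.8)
The plan is to follow the surface-reduction strategy of \cite{DzhFilSto:2020:RCDOPHWDPE}: realize the recurrence \eqref{eq:xyn-evol} as a birational map on a rational surface, identify that surface with the standard $D_{5}^{(1)}$ Sakai surface of Figure~\ref{fig:surface-d5}, and show that the induced dynamic is the translation \eqref{eq:dPD5-transl-KNY}. Concretely, I would first rewrite \eqref{eq:xyn-evol} as the second-order map $(x_{n},y_{n-1})\mapsto(x_{n+1},y_{n})$ on $\mathbb{P}^{1}\times\mathbb{P}^{1}$: the additive relation solves for $y_{n}$ from $(x_{n},y_{n-1})$, and the multiplicative relation then produces $x_{n+1}$. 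After factoring the right-hand sides, the first relation has distinguished $y$-values $y=n,\,n+\beta$ (numerator) and $y=0,\,-\alpha$ (denominator), while the second has the distinguished value $x=1/s$ together with the two roots of its numerator. These are exactly the loci where the map is indeterminate, and blowing them up (eight points in total, arranged as four cascades of length two) regularizes the map.

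Next I would compute the configuration of exceptional curves and the anti-canonical divisor and check that the surface root basis realizes the $D_{5}^{(1)}$ Dynkin diagram on the left of Figure~\ref{fig:d-roots-d51-a31}, with symmetry sublattice of type $A_{3}^{(1)}$. Matching the eight base points to $p_{1},\dots,p_{8}$ fixes a dictionary between the Jacobi data $(\alpha,\beta,n,s)$ and the root variables $(a_{0},a_{1},a_{2},a_{3};t)$: one reads off $t=-s$ and expresses each $a_{i}$ as a linear function of $n$ with slopes $(+1,-1,+1,-1)$ dictated by the special values above, normalized so that $a_{0}+a_{1}+a_{2}+a_{3}=1$ and so that the increment $n\mapsto n+1$ reproduces exactly the root-variable evolution \eqref{eq:dPD5-rv-evol}. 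The isomorphism of surfaces identifying this configuration with the standard one is then a birational change of coordinates, and tracing the exceptional divisors through the blow-downs yields precisely the map \eqref{eq:xy2qp}; reading the computation in the opposite direction produces \eqref{eq:qp2xy}.

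With the transformation in hand, the verification splits into two self-contained computations. First, substituting \eqref{eq:qp2xy} into \eqref{eq:xy2qp} (and conversely) and simplifying should return the identity, confirming that \eqref{eq:xy2qp} is birational with the stated inverse and that $s=-t$ is involutive. Second, I would push the standard equation \eqref{eq:dPD5-KNY} through the change of variables: using the decomposition $\phi=\sigma_{3}\sigma_{2}w_{3}w_{1}w_{2}w_{0}$ of the translation from \eqref{eq:dPD5-decomp-KNY} together with the parameter dictionary, the two components of \eqref{eq:dPD5-KNY}---one advancing $f$, one retarding $g$---should transform into the multiplicative and additive relations of \eqref{eq:xyn-evol}, respectively, with the index shifts $x_{n+1}=x(\overline{f},\overline{g})$ and $y_{n-1}=y(\underline{f},\underline{g})$ matching the $n$-shifts on the left-hand sides.

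I expect the main obstacle to lie in the geometric bookkeeping of the first two steps rather than in the final algebra: correctly locating the eight base points of \eqref{eq:xyn-evol}, resolving the length-two cascades in the right order, and pinning down the parameter dictionary so that the $n$-dependence of \eqref{eq:xy2qp} is consistent with the discrete-time evolution. In particular, because the coordinate change itself carries the index $n$, care is needed to ensure that applying it at time $n$ and then evolving agrees with evolving first and applying it at time $n+1$; this compatibility is what ultimately certifies that the translation direction of $\phi$ matches the forward recurrence $n\mapsto n+1$ rather than its inverse. Once the dictionary is fixed, the remaining substitutions are routine rational-function simplifications.
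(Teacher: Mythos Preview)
Your overall strategy---regularize the recurrence on a rational surface, recognize it as $D_{5}^{(1)}$, match to the standard model, and read off the coordinate change---is exactly the paper's approach. But two concrete steps in your outline would fail as written.

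First, the base-point structure of \eqref{eq:xyn-evol} is not ``four cascades of length two.'' The multiplicative relation does give four \emph{simple} base points: $q_{1}(x=0,y=n)$, $q_{2}(x=0,y=n+\beta)$ from the numerator and $q_{3}(x=\infty,y=0)$, $q_{4}(x=\infty,y=-\alpha)$ from the denominator. The additive relation, however, has a \emph{double} pole at $x=1/s$, and resolving the indeterminacy there produces a single cascade of length \emph{four}, $q_{5}\leftarrow q_{6}\leftarrow q_{7}\leftarrow q_{8}$, over $(1/x=s,\,1/y=0)$. The ``two roots of its numerator'' you mention are not base points of the map. This matters because the surface-root identification (your step~2) depends on the correct intersection configuration of the exceptional divisors, and the length-four cascade is what forces the particular $D_{5}^{(1)}$ labeling the paper uses in Figure~\ref{fig:d-roots-lw}.

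Second, you assume the naive identification of surface roots will automatically make $n\mapsto n+1$ correspond to the standard translation \eqref{eq:dPD5-transl-KNY}. It does not: the preliminary matching gives $\varphi_{*}(\upalpha)=\upalpha+\langle 0,-1,0,1\rangle\delta$, which is a \emph{different} element of the translation lattice. The paper shows $\varphi=w_{1}\circ\phi\circ w_{1}^{-1}$, so one must conjugate the preliminary Picard-lattice identification by $w_{1}$ (Lemma~\ref{lem:change-basis-fin}) before reading off coordinates. Only after this adjustment do the root variables come out as $a_{0}=n+\beta$, $a_{1}=-n$, $a_{2}=n+\alpha$, $a_{3}=1-n-\alpha-\beta$, and only then does the basis change $\mathcal{H}_{x}=\mathcal{H}_{f}+\mathcal{H}_{g}-\mathcal{E}_{5}-\mathcal{E}_{6}$, $\mathcal{H}_{y}=\mathcal{H}_{f}+2\mathcal{H}_{g}-\mathcal{E}_{1}-\mathcal{E}_{3}-\mathcal{E}_{5}-\mathcal{E}_{6}$ yield the specific $(1,1)$- and $(1,2)$-pencils that produce \eqref{eq:xy2qp}. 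Without this conjugation step your coordinate change would intertwine \eqref{eq:xyn-evol} with a non-standard translation, not with \eqref{eq:dPD5-KNY}.
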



\section{Proof of the main result} 
\label{Proof_of_the_main_result}
To proof the Theorem~\ref{thm:coordinate-change}, the first step is understand the singularity structure of the mapping \eqref{eq:xyn-evol}. At this process closely follows the detailed procedure outlined in \cite{DzhFilSto:2020:RCDOPHWDPE}, we omit most of the computations and just present the results.

Using the standard notation $x:=x_{n}$, $\overline{x}: = x_{n+1}$, $\underline{x}: =x_{n-1}$ and similarly for $y$. The recurrence \eqref{eq:xyn-evol} naturally defines two mappings, the forward mapping
\begin{equation}
\varphi_1^{(n)}:(x,y)\mapsto(\overline{x},y)=(\frac{y^2-(2n+\beta)y+n(n+\beta)}{s^2x(y^2+\alpha y)},y),
\end{equation}
and the backward mapping
\begin{equation}
\varphi_2^{(n)}:(x,y)\mapsto(x,\underline{y})=(x,-y - \frac{\alpha s^2x^2+s(2n-1-\alpha+\beta+s)x-2n-\beta+1}{(1-sx)^2}).
\end{equation}
Then we get the full forward and backward mappings, i.e. $\varphi^{(n)} = \left(\varphi_{2}^{(n+1)}\right)^{-1} \circ \varphi_{1}^{(n)}: (x,y)\mapsto (\overline{x},\overline{y})$ and $\varphi^{(n)} = \left(\varphi_{1}^{(n-1)}\right)^{-1} \circ \varphi_{2}^{(n)}: (x,y)\mapsto (\underline{x},\underline{y})$. The explicit forms of these mappings are complex, so we omit them.

Extending these mappings from $\mathbb{C}\times\mathbb{C}$ to $\mathbb{P}^1\times\mathbb{P}^1$, we obtain the base points of the mappings,
\begin{alignat}{2}\label{eq-base-pt-te-jacobi}
	&q_{1}(x = 0,y = n),&\quad &q_{2}(x = 0,y = n+\beta),\notag\\
    &q_{3}\left(X = \frac{1}{x} = 0,y = 0\right),&\quad &q_{4}\left(X = \frac{1}{x} = 0,y = -\alpha\right),\notag\\
	&q_{5}\left(X = \frac{1}{x}=s,Y = \frac{1}{y}= 0\right)&\leftarrow
	&q_{6}\left(u_{5} = X-s = 0,v_{5}= \frac{Y}{X-s} = 0\right)\\
	&&\leftarrow &q_{7}\left(U_{6} = \frac{u_5}{v_5} = -s^3,V_{6} = v_5 = 0\right)\notag\\
	&&\leftarrow &q_{8}\left(U_{7}=\frac{U_6+s^3}{V_6} = s^4(1 - 2n + s - \alpha-\beta),
 V_{7} = V_{6} = 0\right).\notag
\end{alignat}
These points are shown on the left side of Figure~\ref{fig:surface-te-jacobi}. By employing the blowup procedure (for details, refer to \cite{Sha:2013:BAG1}, \cite{DzhFilSto:2020:RCDOPHWDPE}), we obtain the family of Sakai surfaces $\mathcal{X}=\mathcal{X}_{\alpha,\beta,s,n}$, parameterized by $\alpha$, $\beta$, $s$ and $n$\textemdash also known as the configuration space of the mappings\textemdash as depicted in Figure~\ref{fig:surface-te-jacobi} (right). It is worthy noting that the configuration (Figure~\ref{fig:surface-te-jacobi}) of the blowup points lies on a bi-quadratic curve given by the equation $x=0$ in the affine $(x,y)$-chart. Moreover, it is straightforward to see that these points lie on the polar divisor of a symplectic form, which in the affine $(x,y)$-chart is given by $\omega = k \frac{\mathrm{d}x\wedge \mathrm{d}y}{x}$.
\begin{figure}[ht]
	\begin{tikzpicture}[>=stealth,basept/.style={circle, draw=red!100, fill=red!100, thick, inner sep=0pt,minimum size=1.2mm},scale=0.9]
		\begin{scope}[xshift = 0cm]
			\draw [black, line width = 1pt]  	(4,0) 	-- (-0.5,0) 	node [left] {$H_{y}$} node[pos=0, right] {$y=0$};
			\draw [black, line width = 1pt] 	(4,2.5) -- (-0.5,2.5)	node [left] {$H_{y}$} node[pos=0, right] {$y=\infty$};
			\draw [black, line width = 1pt] 	(0,3) -- (0,-0.5)		node [below] {$H_{x}$} node[pos=0, above, xshift=-7pt] {$x=0$};
			\draw [black, line width = 1pt] 	(3,3) -- (3,-0.5)		node [below] {$H_{x}$} node[pos=0, above, xshift=7pt] {$x=\infty$};
			
			\node (q1) at (0,1.1) 	[basept,label={[xshift=-10pt, yshift = -10 pt] $q_{1}$}] {};
			\node (q2) at (0,1.7) 	[basept,label={[xshift=-10pt, yshift = -10 pt] $q_{2}$}] {};
			\node (q3) at (3,0) [basept,label={[xshift=10pt, yshift =-5 pt] $q_{3}$}] {};
			\node (q4) at (3,0.9) [basept,label={[xshift=10pt, yshift =-5 pt] $q_{4}$}] {};
			\node (q5) at (1,2.5) 	[basept,label={[above ] $q_{5}$}] {};
			\node (q6) at (1.45,3.0) 	[basept,label={[above ] $q_{6}$}] {};
			\node (q7) at (2,3.0)		[basept,label={[above ] $q_{7}$}] {};
			\node (q8) at (2.55,3.0) [basept,label={[above ] $q_{8}$}] {};
			\draw [line width = 0.8pt, ->] (q8) edge (q7) (q7) edge (q6) (q6) edge (q5) ;
		\end{scope}

		\draw [->] (7,1.5)--(5,1.5) node[pos=0.5, below] {$\operatorname{Bl}_{q_{1}\cdots q_{8}}$};

		\begin{scope}[xshift = 9cm]
			\draw [blue, line width = 1pt] 	(-0.5,2.5) -- (4,2.5)	node [right] {$H_{y} - F_{5}-F_{6}$};
			\draw [blue, line width = 1pt] 	(0,3) -- (0,-0.5)		node [below] {$H_{x}-F_{1}-F_{2}$};
            \draw [red, line width = 1pt] 	(2.8,0) -- (-0.5,0)	node [left] {$H_{y} - F_{3}$};
			\draw [blue, line width = 1pt] 	(3.5,3) -- (3.5,0.3)  node [right] {$H_{x} - F_{3}-F_{4}$};

			\draw [red, line width = 1 pt] (-0.4,0.9) -- (0.4,1.3) node [pos = 0, left] {$F_{1}$};
			\draw [red, line width = 1 pt] (-0.4,1.5) -- (0.4,1.9) node [pos = 0, left] {$F_{2}$};
            \draw [red, line width = 1 pt] (3.9,0.8) -- (1.9,-0.2) node [below] {$F_{3}$};
            \draw [red, line width = 1 pt] (3.1,0.7) -- (3.9,1.1) node [right] {$F_{4}$};
			\draw [blue, line width = 1 pt] (1,2.9) -- (1,0.5)  node [pos = 0, above right] {$F_{6}-F_{7}$};
            \draw [blue, line width = 1 pt] (0.7,1.1) -- (1.6,1.1)  node [right] {$F_{5}-F_{6}$};
			\draw [blue, line width = 1 pt] (0.7,1.9) -- (2.8,1.9)  node [above] {$F_{7}-F_{8}$};
			\draw [red, line width = 1 pt] (1.8,2.3) -- (1.8,1.5)  node [right] {$F_{8}$};
		\end{scope}
	\end{tikzpicture}
	\caption{The Sakai surface for the time-evolved Jacobi weight recurrence.}
	\label{fig:surface-te-jacobi}
\end{figure}
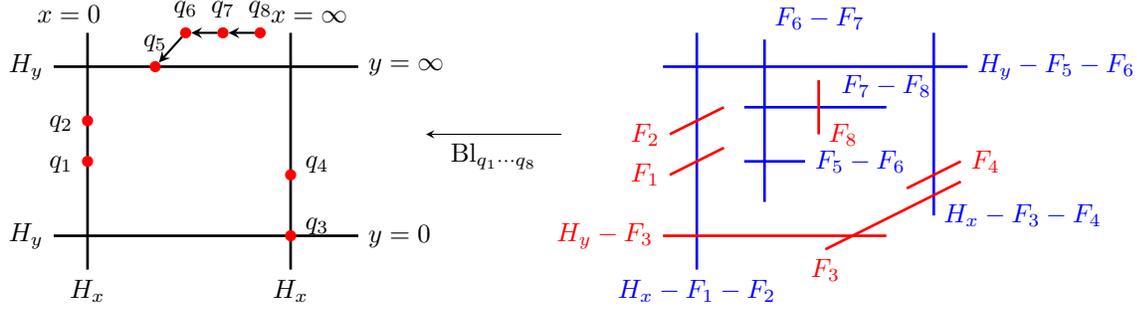

A key object associated with this family is the Picard lattice $\operatorname{Pic}(\mathcal{X}) = \operatorname{Span}_{\mathbb{Z}}\{\mathcal{H}_{x},\mathcal{H}_{y},\mathcal{F}_{1},\ldots, \mathcal{F}_{8}\}$ generated by the classes of coordinate lines $\mathcal{H}_{x,y}=[H_{x,y=c}]$, and classes of the  exceptional divisors $\mathcal{F}_i=[F_i]$. This lattice is equipped with the intersection product defined  on the generators by
\begin{equation}\label{eq:int-form}
\mathcal{H}_{x}\bullet \mathcal{H}_{x} = \mathcal{H}_{y}\bullet \mathcal{H}_{y} = \mathcal{H}_{x}\bullet \mathcal{F}_{i} =
\mathcal{H}_{y}\bullet \mathcal{F}_{j} = 0,\qquad \mathcal{H}_{x}\bullet \mathcal{H}_{y} = 1,\qquad  \mathcal{F}_{i}\bullet \mathcal{F}_{j} = - \delta_{ij}.	
\end{equation}
Using this inner product, we are able to assign to each curve on $\mathcal{X}$ its self-intersection index. Looking at the configuration of the $-2$-curves (depicted as blue lines in Figure~\ref{fig:surface-te-jacobi}), which form irreducible components of the anti-canonical divisor
\begin{align*}
-\mathcal{K}_{\mathcal{X}}
&=2\mathcal{H}_{x}+2\mathcal{H}_{y}-\mathcal{F}_{1}-\cdots-\mathcal{F}_{8}\\
&= [H_{x}-F_{1}-F_{2}]+[H_{x}-F_{3}-F_{4}]+2[H_{y}-F_{5}-F_{6}]+2[F_{6}-F_{7}]+[F_{5}-F_{6}]+[F_{7}-F_{8}]\\
&=
\delta_{0}+\delta_{1}+2\delta_{2}+2\delta_{3}+\delta_{4}+\delta_{5},
\end{align*}
then we can readily see that the surface type of our recurrence is $D_{5}^{(1)}$, with the surface root basis shown on Figure~\ref{fig:d-roots-lw}.
\begin{figure}[H]
\begin{equation}\label{eq:d-roots-lw}
	\raisebox{-32.1pt}{\begin{tikzpicture}[
			elt/.style={circle,draw=black!100,thick, inner sep=0pt,minimum size=2mm},scale=0.75]
		\path 	(-1,1) 	node 	(d0) [elt, label={[xshift=-10pt, yshift = -10 pt] $\delta_{0}$} ] {}
		        (-1,-1) node 	(d1) [elt, label={[xshift=-10pt, yshift = -10 pt] $\delta_{1}$} ] {}
		        ( 0,0) 	node  	(d2) [elt, label={[xshift=-10pt, yshift = -10 pt] $\delta_{2}$} ] {}
		        ( 1,0) 	node  	(d3) [elt, label={[xshift=10pt, yshift = -10 pt] $\delta_{3}$} ] {}
		        ( 2,1) 	node  	(d4) [elt, label={[xshift=10pt, yshift = -10 pt] $\delta_{4}$} ] {}
		        ( 2,-1) node 	(d5) [elt, label={[xshift=10pt, yshift = -10 pt] $\delta_{5}$} ] {};
		\draw [black,line width=1pt ] (d0) -- (d2) -- (d1)  (d2) -- (d3) (d4) -- (d3) -- (d5);
	\end{tikzpicture}} \qquad
			\begin{alignedat}{2}
			\delta_{0} &= \mathcal{H}_{x} - \mathcal{F}_{1} - \mathcal{F}_{2}, &\qquad  \delta_{3} &= \mathcal{F}_{6} - \mathcal{F}_{7},\\
			\delta_{1} &= \mathcal{H}_{x} - \mathcal{F}_{3} - \mathcal{F}_{4}, &\qquad  \delta_{4} &= \mathcal{F}_{5} - \mathcal{F}_{6},\\
			\delta_{2} &= \mathcal{H}_{y} - \mathcal{F}_{5} - \mathcal{F}_{6}, &\qquad  \delta_{5} &= \mathcal{F}_{7} - \mathcal{F}_{8}.
			\end{alignedat}
\end{equation}
	\caption{The surface root basis for the time-evolved Jacobi weight recurrence.}
	\label{fig:d-roots-lw}
\end{figure}

We can calculate the evolution of the surface roots \eqref{eq:d-roots-lw} under the full forward mapping $\varphi_{*}: \operatorname{Pic}(\mathcal{X})\to \operatorname{Pic}(\overline{\mathcal{X}})$, with $\overline{\mathcal{X}}=\mathcal{X}_{\alpha,\beta,s,n+1}$, as summarized in the following lemma.
\begin{lemma}\label{lem:dyn}
	The action of the mapping $\varphi_{*}: \operatorname{Pic}(\mathcal{X})\to \operatorname{Pic}(\overline{\mathcal{X}})$
is given by
	\begin{align*}
\begin{aligned}
&\mathcal{H}_x\mapsto5\overline{\mathcal{H}}_x+2\overline{\mathcal{H}}_y-\overline{\mathcal{F}}_{1234}-2\overline{\mathcal{F}}_{5678}, \\
&\mathcal{F}_1\mapsto2\overline{\mathcal{H}}_x+\overline{\mathcal{H}}_y-\overline{\mathcal{F}}_{25678},
\\
&\mathcal{F}_2\mapsto2\overline{\mathcal{H}}_x+\overline{\mathcal{H}}_y-\overline{\mathcal{F}}_{15678},
\\
&\mathcal{F}_3\mapsto2\overline{\mathcal{H}}_x+\overline{\mathcal{H}}_y-\overline{\mathcal{F}}_{45678},
\\
&\mathcal{F}_4\mapsto2\overline{\mathcal{H}}_x+\overline{\mathcal{H}}_y-\overline{\mathcal{F}}_{35678},
\end{aligned}
&&
\begin{aligned}
&\mathcal{H}_y\mapsto2\overline{\mathcal{H}}_x+\overline{\mathcal{H}}_y-\overline{\mathcal{F}}_{5678}, \\
&\mathcal{F}_5\mapsto\overline{\mathcal{H}}_x-\overline{\mathcal{F}}_{8},
\\
&\mathcal{F}_6\mapsto\overline{\mathcal{H}}_x-\overline{\mathcal{F}}_{7},
\\
&\mathcal{F}_7\mapsto\overline{\mathcal{H}}_x-\overline{\mathcal{F}}_{6},
\\
&\mathcal{F}_8\mapsto\overline{\mathcal{H}}_x-\overline{\mathcal{F}}_{5}.
\end{aligned}
\end{align*}
where we use the notation $\mathcal{F}_{i\cdots j}=\mathcal{F}_{i}+ \cdots + \mathcal{F}_{j}$.
\end{lemma}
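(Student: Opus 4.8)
The plan is to compute the linear map $\varphi_*$ generator by generator, exploiting two structural facts. First, $\varphi\colon\mathcal{X}\to\overline{\mathcal{X}}$ is an isomorphism of surfaces, so $\varphi_*$ is a lattice isometry for the intersection form \eqref{eq:int-form} that must send $-\mathcal{K}_{\mathcal{X}}$ to $-\mathcal{K}_{\overline{\mathcal{X}}}$ and carry the surface sub-lattice $\langle\delta_0,\dots,\delta_5\rangle$ of \eqref{eq:d-roots-lw} to its counterpart on $\overline{\mathcal{X}}$. Second, $\varphi^{(n)}=(\varphi_2^{(n+1)})^{-1}\circ\varphi_1^{(n)}$ factors into two elementary transformations: $\varphi_1^{(n)}$ preserves the $\{y=\mathrm{const}\}$ fibration and, since $\overline{x}=(y-n)(y-n-\beta)/\big(s^2x\,y(y+\alpha)\big)$, is indeterminate only at $q_1,\dots,q_4$, whereas $(\varphi_2^{(n+1)})^{-1}$ preserves the $\{x=\mathrm{const}\}$ fibration and carries all of the corner cascade $q_5,\dots,q_8$ at $x=1/s$. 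I would compute the Picard actions of the two factors separately and compose, because each factor touches only half of the base-point data in \eqref{eq-base-pt-te-jacobi} and is therefore far easier to resolve than the full map.

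For the line classes I would use direct pushforward. To obtain $\varphi_*\mathcal{H}_x$, take a generic fiber $\{x=c\}$, substitute into the explicit map to get its image as a rational curve $y\mapsto(\overline{x}(c,y),\overline{y}(c,y))$, and read off the bidegree of the polynomial defining this image together with the points of the target configuration (for parameter $n+1$) through which it passes and to what order. The coefficient of $\overline{\mathcal{H}}_x$ is the $\overline{x}$-degree and that of $\overline{\mathcal{H}}_y$ the $\overline{y}$-degree, while simple or double passage through $\overline{q}_1,\dots,\overline{q}_4$ and $\overline{q}_5,\dots,\overline{q}_8$ fixes the subtracted $\overline{\mathcal{F}}_i$; this should return $5\overline{\mathcal{H}}_x+2\overline{\mathcal{H}}_y-\overline{\mathcal{F}}_{1234}-2\overline{\mathcal{F}}_{5678}$. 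The same procedure applied to $\{y=c\}$ gives $\varphi_*\mathcal{H}_y$.

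For the exceptional classes $\mathcal{F}_i$ I would track how each blown-up point and its exceptional curve propagates through $\varphi_1^{(n)}$ and then $(\varphi_2^{(n+1)})^{-1}$, using the values $y\in\{n,n+\beta,0,-\alpha\}$ and the fibers $\{x=0\},\{x=\infty\}$ to see which curves are contracted and which are created at each stage. In practice it suffices to pin down $\varphi_*\mathcal{H}_x$, $\varphi_*\mathcal{H}_y$ and a few of the $\mathcal{F}_i$ by hand, after which the remaining images are essentially forced: the constraints that $\varphi_*$ preserve every intersection number, fix the anticanonical class, and respect the surface sub-lattice rigidify the answer and catch any sign or multiplicity slip. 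I would present the final table as the result of these consistency checks rather than of eight independent computations.

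The main obstacle will be the infinitely-near cascade $q_5\leftarrow q_6\leftarrow q_7\leftarrow q_8$ at the corner $(X,Y)=(s,0)$. This is exactly where the multiplicities $2\overline{\mathcal{F}}_{5678}$ and the degree jump to $5\overline{\mathcal{H}}_x$ originate, so the indeterminacy of $(\varphi_2^{(n+1)})^{-1}$ along this chain must be resolved order by order in the local charts $u_5,v_5,U_6,V_6,U_7,V_7$ of \eqref{eq-base-pt-te-jacobi} to determine precisely which exceptional curve is sent to which. The cyclic pattern $\mathcal{F}_5\mapsto\overline{\mathcal{H}}_x-\overline{\mathcal{F}}_8$, $\mathcal{F}_6\mapsto\overline{\mathcal{H}}_x-\overline{\mathcal{F}}_7$, $\mathcal{F}_7\mapsto\overline{\mathcal{H}}_x-\overline{\mathcal{F}}_6$, $\mathcal{F}_8\mapsto\overline{\mathcal{H}}_x-\overline{\mathcal{F}}_5$ signals that the map reverses the order of this cascade, and verifying this reversal through the local blowup computation is the delicate step; everything else is linear bookkeeping constrained by the isometry conditions above.
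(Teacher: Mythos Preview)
Your plan is correct and is essentially the standard procedure that the paper invokes by citing \cite{DzhFilSto:2020:RCDOPHWDPE}; the paper itself does not spell out a proof of this lemma but simply states the result after the remark that ``this process closely follows the detailed procedure outlined in [DFS20], we omit most of the computations and just present the results.'' Your factorization of $\varphi^{(n)}$ through the two half-steps, the identification of which base points each factor sees, the pushforward-by-image-curve computation of $\varphi_*\mathcal{H}_x$ and $\varphi_*\mathcal{H}_y$, and the use of the isometry and anticanonical constraints to pin down and cross-check the remaining $\varphi_*\mathcal{F}_i$ are exactly what that procedure amounts to, so there is nothing to correct.
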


The next step is to match \eqref{eq:xyn-evol} and \eqref{eq:dPD5-KNY} at the geometry and dynamic levels. First, we need to match the geometries by finding a basis transformation of the Picard lattice that identifies the surface roots in Figure~\ref{fig:d-roots-lw} and \ref{fig:d-roots-d51-a31} (left). We immediately find that such a transformation is given by
\begin{equation}\label{eq:pre-basis-transformation}
\begin{alignedat}{5}	
\mathcal{H}_f&=2\mathcal{H}_x+\mathcal{H}_y-\mathcal{F}_{1356},&\quad
\mathcal{E}_1&=\mathcal{H}_x-\mathcal{F}_1,	&\quad
\mathcal{E}_2&=\mathcal{F}_2,&\quad
\mathcal{E}_3&=\mathcal{H}_x-\mathcal{F}_3,	&\quad
\mathcal{E}_4&=\mathcal{F}_4,\\
\mathcal{H}_g&=\mathcal{H}_x,&\quad
\mathcal{E}_5&=\mathcal{H}_x-\mathcal{F}_6,&\quad
\mathcal{E}_6&=\mathcal{H}_x-\mathcal{F}_5,&\quad
\mathcal{E}_7&=\mathcal{F}_7,&\quad
\mathcal{E}_8&=\mathcal{F}_8.
\end{alignedat}
\end{equation}
Using this identification, along with the standard symmetry root basis shown in Figure~\ref{fig:d-roots-d51-a31} (right), we get the preliminary symmetry root basis for the time-evolved Jacobi recurrence, which is given by
\begin{equation}	
\alpha_{0}=\mathcal{F}_{1}-\mathcal{F}_{2},\quad
\alpha_{1}=\mathcal{H}_{y}-\mathcal{F}_{13},\quad
\alpha_{2}=\mathcal{F}_{3}-\mathcal{F}_{4},\quad
\alpha_{3}=2\mathcal{H}_{x}+\mathcal{H}_{y}-\mathcal{F}_{135678},
\end{equation}
and applying the evolution in Lemma~\ref{lem:dyn}, we see that the symmetry roots evolve as
\begin{equation}
\varphi_{*}: \upalpha =  \langle \alpha_{0}, \alpha_{1}, \alpha_{2}, \alpha_{3}  \rangle
	\mapsto \varphi_{*}(\upalpha) = \upalpha + \langle 0,-1,0,1 \rangle \delta,
\end{equation}
decompose it in terms of the generators of the extended affine Weyl symmetry group, we get
\begin{equation}\label{eq:generators}
\varphi = \sigma_{3}\sigma_{2}w_{1}w_{2}w_{0}w_{1},
\end{equation}
compare \eqref{eq:generators} and \eqref{eq:dPD5-decomp-KNY}, we immediately see that $\varphi=w_{1}\circ\phi\circ w_{1}^{-1}$ (note that $w_{1} \sigma_{3}\sigma_{2}  = \sigma_{3}\sigma_{2} w_{3}$ and that $w_{1}$ is an involution, $w_{1}^{-1} = w_{1}$). Thus, after adjusting our basis transformation \eqref{eq:pre-basis-transformation} in $\operatorname{Pic}(\mathcal{X})$ by acting on it with $w_{1}$, our dynamic becomes equivalent to the standard
equation \eqref{eq:dPD5-KNY}.
\begin{lemma}\label{lem:change-basis-fin}
The basis transformation of the Picard lattice identifying both the geometry and the
standard dynamics between the time-evolved Jacobi recurrence and the standard d-$\dPain{A_{3}^{(1)}/D_{5}^{(1)}}$ equation is given by
	\begin{align*}
\begin{aligned}
&\mathcal{H}_x=\mathcal{H}_f+\mathcal{H}_g-\mathcal{E}_5-\mathcal{E}_6,
\\
&\mathcal{H}_y=\mathcal{H}_f+2\mathcal{H}_g-\mathcal{E}_1-\mathcal{E}_3-\mathcal{E}_5-\mathcal{E}_6,
\\
&\mathcal{F}_1=\mathcal{H}_f+\mathcal{H}_g-\mathcal{E}_1-\mathcal{E}_5-\mathcal{E}_6,
\\
&\mathcal{F}_2=\mathcal{E}_2,
\\
&\mathcal{F}_3=\mathcal{H}_f+\mathcal{H}_g-\mathcal{E}_3-\mathcal{E}_5-\mathcal{E}_6,
\\
&\mathcal{F}_4=\mathcal{E}_4,
\\
&\mathcal{F}_5=\mathcal{H}_g-\mathcal{E}_6,
\\
&\mathcal{F}_6=\mathcal{H}_g-\mathcal{E}_5,
\\
&\mathcal{F}_7=\mathcal{E}_7,
\\
&\mathcal{F}_8=\mathcal{E}_8,
\end{aligned}
&&
\begin{aligned}
&\mathcal{H}_f=2\mathcal{H}_x+\mathcal{H}_y-\mathcal{F}_1-\mathcal{F}_3-\mathcal{F}_5-\mathcal{F}_6,
\\
&\mathcal{H}_g=\mathcal{H}_x+\mathcal{H}_y-\mathcal{F}_1-\mathcal{F}_3,
\\
&\mathcal{E}_1=\mathcal{H}_x-\mathcal{F}_1,
\\
&\mathcal{E}_2=\mathcal{F}_2,
\\
&\mathcal{E}_3=\mathcal{H}_x-\mathcal{F}_3,
\\
&\mathcal{E}_4=\mathcal{F}_4,
\\
&\mathcal{E}_5=\mathcal{H}_x+\mathcal{H}_y-\mathcal{F}_1-\mathcal{F}_3-\mathcal{F}_6,
\\
&\mathcal{E}_6=\mathcal{H}_x+\mathcal{H}_y-\mathcal{F}_1-\mathcal{F}_3-\mathcal{F}_5,
\\
&\mathcal{E}_7=\mathcal{F}_7,
\\
&\mathcal{E}_8=\mathcal{F}_8.
\end{aligned}
\end{align*}
The resulting identification of the symmetry root bases is given by
\begin{equation}\label{eq:a-roots-lw-fin}
			\alpha_{0} = \mathcal{H}_{y} - \mathcal{F}_{23}, \quad
            \alpha_{1} = \mathcal{F}_{13} - \mathcal{H}_{y}, \quad
			\alpha_{2} = \mathcal{H}_{y} -  \mathcal{F}_{14},\quad
			\alpha_{3} = 2\mathcal{H}_{x} + \mathcal{H}_{y} -
			\mathcal{F}_{135678}.
\end{equation}
\end{lemma}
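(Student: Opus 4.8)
The plan is to realise the final identification as the correction of the preliminary transformation \eqref{eq:pre-basis-transformation} by the reflection $w_1$, exactly as anticipated in the paragraph preceding the lemma. I would write $\iota_0$ for the lattice isometry from the standard Picard lattice $\operatorname{Span}_{\mathbb{Z}}\{\mathcal{H}_f,\mathcal{H}_g,\mathcal{E}_1,\dots,\mathcal{E}_8\}$ to $\operatorname{Pic}(\mathcal{X})$ recorded by \eqref{eq:pre-basis-transformation}, and set $\iota = \iota_0\circ w_1$, where $w_1 = w_{\alpha_1}$ acts on the standard lattice through \eqref{eq:root-refl} with $\alpha_1 = \mathcal{H}_f - \mathcal{E}_5 - \mathcal{E}_6$. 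Since $\iota_0$ carries the Jacobi dynamics to $\varphi = \sigma_3\sigma_2 w_1 w_2 w_0 w_1 = w_1\circ\phi\circ w_1^{-1}$ in the standard generators (combining \eqref{eq:generators} with \eqref{eq:dPD5-decomp-KNY}), the map $\iota$ carries it to $w_1^{-1}\circ\varphi\circ w_1 = \phi$; thus $\iota$ matches the dynamics, and because $w_1$ fixes every surface root it still matches the geometry. It then remains only to turn this description into the explicit formulas.

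First I would record the reflection on the standard generators. By \eqref{eq:root-refl} one has $w_1(\mathcal{C}) = \mathcal{C} + (\mathcal{C}\bullet\alpha_1)\alpha_1$, and a one-line intersection computation with \eqref{eq:int-form} shows that the only generators with $\mathcal{C}\bullet\alpha_1\neq 0$ are $\mathcal{H}_g$, $\mathcal{E}_5$, $\mathcal{E}_6$, each meeting $\alpha_1$ once; every other generator is fixed. Hence $\iota$ differs from $\iota_0$ only on these three classes, to whose preliminary images one adds $\iota_0(\alpha_1) = \mathcal{H}_y - \mathcal{F}_{13}$. For instance $\iota(\mathcal{H}_g) = \mathcal{H}_x + (\mathcal{H}_y - \mathcal{F}_{13}) = \mathcal{H}_x + \mathcal{H}_y - \mathcal{F}_1 - \mathcal{F}_3$, while $\iota(\mathcal{E}_5) = (\mathcal{H}_x - \mathcal{F}_6) + (\mathcal{H}_y - \mathcal{F}_{13})$ and $\iota(\mathcal{E}_6) = (\mathcal{H}_x - \mathcal{F}_5) + (\mathcal{H}_y - \mathcal{F}_{13})$; these reproduce the right-hand column of the lemma, and all remaining entries coincide with \eqref{eq:pre-basis-transformation}. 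The left-hand column is then obtained either by inverting this linear system or, equivalently, as $\iota^{-1} = w_1\circ\iota_0^{-1}$, which adds $\alpha_1 = \mathcal{H}_f - \mathcal{E}_5 - \mathcal{E}_6$ to the image of each Jacobi class meeting $\iota_0(\alpha_1)$ nontrivially.

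Finally I would confirm the induced root identifications. For the symmetry roots it suffices to reflect the preliminary basis $\alpha_0 = \mathcal{F}_{1} - \mathcal{F}_{2}$, $\alpha_1 = \mathcal{H}_y - \mathcal{F}_{13}$, $\alpha_2 = \mathcal{F}_{3} - \mathcal{F}_{4}$, $\alpha_3 = 2\mathcal{H}_x + \mathcal{H}_y - \mathcal{F}_{135678}$ in $\alpha_1$; using $\alpha_0\bullet\alpha_1 = \alpha_2\bullet\alpha_1 = 1$, $\alpha_1\bullet\alpha_1 = -2$ and $\alpha_3\bullet\alpha_1 = 0$, this sends them to $\mathcal{H}_y - \mathcal{F}_{23}$, $\mathcal{F}_{13} - \mathcal{H}_y$, $\mathcal{H}_y - \mathcal{F}_{14}$ and $\alpha_3$ (unchanged), which is precisely \eqref{eq:a-roots-lw-fin}. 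The surface roots of Figure~\ref{fig:d-roots-lw} are matched to those of Figure~\ref{fig:d-roots-d51-a31} exactly as under $\iota_0$, since the intersections above, together with the orthogonality of $\alpha_1$ to each $\delta_i$, show $w_1$ fixes every surface root.

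I do not expect a substantive obstacle at this stage: once the conjugation $\iota = \iota_0\circ w_1$ is in place the computations are purely mechanical, and the genuine content lies upstream in the decomposition \eqref{eq:generators} and the resulting identity $\varphi = w_1\circ\phi\circ w_1^{-1}$. The only point demanding care is bookkeeping, namely verifying that the two displayed columns are genuine mutual inverses and that each preserves the intersection form \eqref{eq:int-form}. This is a finite check on the ten generators, which I would either perform directly or deduce immediately from the fact that $\iota_0$ and $w_1$ are each isometries.
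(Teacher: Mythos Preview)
Your proposal is correct and follows essentially the same approach as the paper: the lemma is obtained by conjugating the preliminary identification \eqref{eq:pre-basis-transformation} by the reflection $w_{1}$, exactly as the text preceding the lemma indicates, and you have simply made explicit the mechanical intersection computations that the paper leaves to the reader.
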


The last step is to find the explicit coordinate transformation. To do this, we need to calculate the root variables for the time-evolved Jacobi case, which is done in the following lemma.
\begin{lemma}
	\qquad
	
	\begin{enumerate}[(i)]
		\item The residues of the symplectic form $\omega = k \frac{\mathrm{d}x\wedge \mathrm{d}y}{x}$
		along the irreducible components of the polar divisor are given by
		\begin{alignat*}{3}
			\operatorname{res}_{d_{0}} \omega &=  k\mathrm{d}y, &\qquad
			\operatorname{res}_{d_{2}} \omega &=  0,\quad &\qquad
			\operatorname{res}_{d_{4}} \omega &=  k\frac{\mathrm{d}v_5}{sv_{5}^2},\\
			\operatorname{res}_{d_{1}} \omega &=  -k\mathrm{d}y, &\qquad
            \operatorname{res}_{d_{3}} \omega &=  k\frac{\mathrm{d}U_6}{s^2}, &\qquad
			\operatorname{res}_{d_{5}} \omega &=  k\frac{\mathrm{d}U_7}{s^4}.
		\end{alignat*}
		
		\item For the standard root variable normalization $\mathcal{X}(\delta)=a_{0}+a_{1}+a_{2}+a_{3}=1$ we need to take $k=-1$ and root variables $a_{i}$ are then given by

		\begin{equation}
a_{0} = n+\beta, \quad a_{1} = -n,\quad
a_{2} = n + \alpha, \quad a_{3}=1-n-\alpha-\beta.
\end{equation}
		\end{enumerate}
\end{lemma}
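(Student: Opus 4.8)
The plan is to carry out everything locally in the blow-up charts recorded in \eqref{eq-base-pt-te-jacobi}, since both the residues in part (i) and the root variables in part (ii) are local data of the configuration carried by the form $\omega = k\,\mathrm{d}x\wedge\mathrm{d}y/x$.

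For part (i) the two horizontal components are immediate: writing $\omega = k\frac{\mathrm{d}x}{x}\wedge \mathrm{d}y$ gives $\operatorname{res}_{d_0}\omega = k\,\mathrm{d}y$ along $d_0=\{x=0\}$, and substituting $X=1/x$ turns $\omega$ into $-k\frac{\mathrm{d}X}{X}\wedge\mathrm{d}y$, so $\operatorname{res}_{d_1}\omega=-k\,\mathrm{d}y$ along $d_1=\{x=\infty\}$. For the cascade components $d_4=F_5-F_6$, $d_3=F_6-F_7$, $d_5=F_7-F_8$ I would pull $\omega$ back successively through the four blow-ups $q_5\leftarrow q_6\leftarrow q_7\leftarrow q_8$ using exactly the coordinates in \eqref{eq-base-pt-te-jacobi}. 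In the first chart $X=s+u_5,\ Y=u_5v_5$ one gets $\omega=k\frac{\mathrm{d}u_5\wedge \mathrm{d}v_5}{(s+u_5)u_5v_5^2}$, whose simple pole along $F_5$ reads off $\operatorname{res}_{d_4}\omega=k\frac{\mathrm{d}v_5}{sv_5^2}$; continuing into the $(U_6,V_6)$ and then $(U_7,V_7)$ charts the pole along $F_7$ becomes simple and yields $\operatorname{res}_{d_5}\omega=k\frac{\mathrm{d}U_7}{s^4}$.

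The genuinely delicate points are $d_2=\{y=\infty\}$ and $d_3=F_6-F_7$, which enter $-\mathcal{K}_{\mathcal{X}}$ with multiplicity two and along which $\omega$ has a double pole, so the naive Poincar\'e residue does not apply. For these I would Laurent-expand $\omega$ in the local defining coordinate of the component and take the Poincar\'e residue only of its simple-pole ($1/w$) part. For $d_2$ one has $\omega=k\frac{\mathrm{d}X\wedge\mathrm{d}Y}{XY^2}$ with no $1/Y$ term, hence $\operatorname{res}_{d_2}\omega=0$; for $d_3$, expanding $\frac{1}{s+U_6V_6}=\frac1s\bigl(1-\frac{U_6V_6}{s}+\cdots\bigr)$ isolates the simple-pole contribution $-\frac{k}{s^2}\frac{\mathrm{d}U_6\wedge\mathrm{d}V_6}{V_6}$, whose residue is $k\frac{\mathrm{d}U_6}{s^2}$. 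Keeping the chart-to-chart coordinate changes straight and correctly extracting these simple-pole parts is the main obstacle in part (i).

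For part (ii) I would invoke the period-map construction of Sakai as used in \cite{Sak:2001:RSAWARSGPE,DzhFilSto:2020:RCDOPHWDPE}: the root variable attached to each symmetry root $\alpha_i$ in \eqref{eq:a-roots-lw-fin} is obtained by pairing $\alpha_i$ with the residue $1$-forms from part (i), i.e.\ by integrating the relevant $\operatorname{res}_{d_j}\omega$ between the base points recorded in \eqref{eq-base-pt-te-jacobi}. Since every residue is proportional to $k$, the value of the period map on the null root $\delta=-\mathcal{K}_{\mathcal{X}}=\alpha_0+\alpha_1+\alpha_2+\alpha_3$ is a fixed multiple of $k$, and the normalization $\mathcal{X}(\delta)=a_0+a_1+a_2+a_3=1$ forces $k=-1$. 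With $k=-1$ the roots supported on a single horizontal component are computed directly: $a_0+a_1=\mathcal{X}(\mathcal{F}_1-\mathcal{F}_2)=\int_{q_2}^{q_1}\operatorname{res}_{d_0}\omega=-k\beta=\beta$ and $a_1+a_2=\mathcal{X}(\mathcal{F}_3-\mathcal{F}_4)=\int_{q_4}^{q_3}\operatorname{res}_{d_1}\omega=-k\alpha=\alpha$, using $y(q_1)=n,\ y(q_2)=n+\beta,\ y(q_3)=0,\ y(q_4)=-\alpha$. The remaining variable $a_3=\mathcal{X}(\alpha_3)$ involves the cascade classes $\mathcal{F}_5,\dots,\mathcal{F}_8$, and I would evaluate it from $\operatorname{res}_{d_3}\omega$ and $\operatorname{res}_{d_5}\omega$ at $q_7,q_8$ (equivalently, extract it from the normalization), obtaining $a_0+a_1+a_2=n+\alpha+\beta$; solving the resulting linear system then gives $a_1=-n$, $a_0=n+\beta$, $a_2=n+\alpha$ and $a_3=1-n-\alpha-\beta$. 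The main difficulty here is setting up the period pairing with the correct signs and orientations, in particular the contributions of the $\mathcal{H}$-classes, and handling the cascade contribution to $a_3$ consistently with part (i).
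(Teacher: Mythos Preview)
Your proposal is correct and follows the standard period-map methodology that the paper itself relies on; in fact the paper states this lemma without proof, merely citing \cite{DzhFilSto:2020:RCDOPHWDPE} for the general procedure, so your outline \emph{is} essentially the omitted argument. Your treatment of the multiplicity-two components $d_{2}$ and $d_{3}$ via the simple-pole part of the Laurent expansion is the right move, and your chart computations for $d_{4}$ and $d_{5}$ match the blow-up coordinates in \eqref{eq-base-pt-te-jacobi}.

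One small point worth tightening in part (ii): you compute $a_{0}+a_{1}=\beta$ and $a_{1}+a_{2}=\alpha$ cleanly from the horizontal components, but then assert $a_{0}+a_{1}+a_{2}=n+\alpha+\beta$ ``from the cascade'' or ``from the normalization'' without actually carrying out either. The normalization alone gives only $a_{0}+a_{1}+a_{2}=1-a_{3}$, so you still need one independent equation. The cleanest way is to compute $a_{3}$ directly from the cascade residue $\operatorname{res}_{d_{5}}\omega = k\,\mathrm{d}U_{7}/s^{4}$: the point $q_{8}$ sits at $U_{7}=s^{4}(1-2n+s-\alpha-\beta)$ on $F_{7}-F_{8}$, and pairing this against the intersection of $d_{5}$ with $d_{3}$ (together with the contribution of $\operatorname{res}_{d_{3}}\omega$ at $q_{7}$, where $U_{6}=-s^{3}$) yields $a_{3}=-k(1-n-\alpha-\beta)$, which then forces $k=-1$ and closes the system. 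This is exactly the ``handling the cascade contribution'' you flag as the main difficulty; it is a straightforward if bookkeeping-heavy computation, and your plan is otherwise complete.
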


We are now finally able to derive the explicit coordinate transformation \eqref{eq:xy2qp} and \eqref{eq:qp2xy} in Theorem \ref{thm:coordinate-change}. This computation follows standard procedures, for detailed examples,refer to \cite{DzhFilSto:2020:RCDOPHWDPE} and \cite{DzhTak:2018:OSAOSGTODPE}. Here, we merely provide an outline.

From the basis transformation for the coordinate classes on the Picard lattice given by Lemma~\ref{lem:change-basis-fin},
\begin{equation*}
\mathcal{H}_x=\mathcal{H}_f+\mathcal{H}_g-\mathcal{E}_5-\mathcal{E}_6, \quad
\mathcal{H}_y=\mathcal{H}_f+2\mathcal{H}_g-\mathcal{E}_1-\mathcal{E}_3-\mathcal{E}_5-\mathcal{E}_6,
\end{equation*}
we see that $x$ and $y$ represent projective coordinates on pencils of $(1,1)$-curves in the $(f,g)$-coordinates. Specifically, $x$ corresponds to the pencil passing through $p_{5}$ and $p_{6}$, while $y$ corresponds to the pencil passing through $p_{1}$, $p_{3}$, $p_{5}$ and $p_{6}$. Consequently, we take coordinate transformation as
\begin{equation*}
x(f,g)=\frac{Af+B(fg-a_1)}{Cf+D(fg-a_1)}, \quad
y(f,g)=\frac{K+Lg(f(g+t)-a_1)}{M+Ng(f(g+t)-a_1)},
\end{equation*}
where the coefficients $A,\ldots,N$ are still to be determined. For example, the correspondence $H_{f} - E_{1} - E_{3} = H_{y}-F_{5} - F_{6}$ means that
\begin{equation*}
Y(F=0,g)=\frac{M\cdot0+Ng(g+t)-N\cdot0}{K\cdot0+Lg(g+t)-L\cdot0}=\frac{N}{L},\quad \text{and so}\quad N=0,
\end{equation*}
then we can take $M=1$ to get
\begin{equation*}
y(f,g)=K+Lg(f(g+t)-a_1).
\end{equation*}
Proceeding in the same way, we can proof the \eqref{eq:xy2qp} in Theorem~\ref{thm:coordinate-change}. The
inverse change of variables \eqref{eq:qp2xy} can be either obtained directly, or computed in the similar
way.
\section*{Acknowledgements} 
\label{sec:acknowledgements}
M. Zhu acknowledges the support of the National Natural Science Foundation of China under Grant No. 12201333, the Natural Science Foundation of Shandong Province (Grant No. ZR2021QA034), and the Breeding Plan of Shandong Provincial Qingchuang Research Team (Grant No. 2023KJ135). X. Zhang is grateful for the support provided by the Natural Science Foundation of Shandong Province (Grant No. ZR2022MA057).

\bibliographystyle{amsalpha}

\providecommand{\bysame}{\leavevmode\hbox to3em{\hrulefill}\thinspace}
\providecommand{\MR}{\relax\ifhmode\unskip\space\fi MR }
\newcommand{\etalchar}[1]{$^{#1}$}
\providecommand{\href}[2]{#2}

\end{document}